\newcommand{\IR}{\mathbb R}
\newcommand{\IZ}{\mathbb Z}
\newcommand{\IN}{\mathbb N}
\newcommand{\w}{\omega}
\newtheorem{theorem}{Theorem}
\newtheorem{proposition}{Proposition}
\begin{document}
\title{An example of a non-Borel locally-connected finite-dimensional topological group}
\author{I.~Banakh, T.~Banakh, M.~Vovk}
\address{T.Banakh: Ivan Franko National University of Lviv}
\email{t.o.banakh@gmail.com}
\address{I.Banakh: Ya. Pidstryhach Institute for Applied Problems of Mechanics and Mathematics, Lviv}
\email{ibanakh@yahoo.com}
\address{M.Vovk: Lviv Polytechnic National University}
\email{mira.i.kopych@gmail.com}
\subjclass{22A05; 54F45; 54F35}

\begin{abstract}
Answering a question posed by S.Maillot in MathOverFlow, for every $n\in\IN$ we construct a locally connected subgroup $G\subset\IR^{n+1}$ of dimension $\dim(G)=n$, which is not locally compact.
\end{abstract}
\maketitle

By a classical result of Gleason \cite{Gl} and Montgomery \cite{Mon}, every locally path-connected finite-dimensional topological group $G$ is locally compact. Generalizing this result of Gleason and Montgomery, Banakh and Zdomskyy \cite{BZ} proved that a topological group $G$ is locally compact if $G$ is compactly finite-dimensional and locally continuum-connected. In \cite{MO} Sylvain Maillot asked if the locally path-connectedness in the result of Gleason and Montgomery can be replaced by the local connectedness. In this paper we construct a counterexample to this question of S.~Maillot.
We recall that a subset $B$ of a Polish space $X$ is called a {\em Bernstein set} in $X$ if both $B$ and $X\setminus B$ meet every uncountable closed subset $F$ of $X$. Bernstein sets in Polish space can be easily constructed by transfinite induction, see \cite[8.24]{Ke}.

\begin{theorem} For every $n\ge n$ the Euclidean space $\IR^n$ contains a dense additive subgroup $G\subset\IR^n$
such that
\begin{enumerate}
\item $G$ is a Bernstein set in $\IR^n$;
\item $G$ is locally connected;
\item $G$ has dimension $\dim(G)=n-1$;
\item $G$ is not Borel and hence not locally compact.
\end{enumerate}
\end{theorem}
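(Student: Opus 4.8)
The plan is to build $G$ so that \emph{both} $G$ and its complement $Z:=\IR^n\setminus G$ are \emph{totally imperfect} (contain no nonempty perfect set). Property (1) is then precisely the assertion that $G$ is a Bernstein set, and I intend to extract every topological conclusion (2)--(4) from this single fact about $Z$, together with two classical theorems of dimension theory: that a subset of $\IR^n$ has dimension $n$ if and only if it has nonempty interior, and that for $n\ge 2$ the complement of a totally imperfect subset of $\IR^n$ is connected. (The hypothesis should read $n\ge 2$: for $n=1$ a dense Bernstein set is totally disconnected and cannot be locally connected, so $\dim G=n-1\ge 1$ is needed.)

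First I would construct $G$ by transfinite recursion of length the continuum $\mathfrak c$. Enumerate the perfect subsets of $\IR^n$ as $\{P_\alpha:\alpha<\mathfrak c\}$, set $G_0=\mathbb Q^n$, and build an increasing chain of $\mathbb Q$-linear subspaces $G_\alpha\subset\IR^n$ of cardinality $<\mathfrak c$ together with an increasing set $R_\alpha$ of ``forbidden'' points, keeping $G_\beta\cap R_\beta=\emptyset$ throughout: at stage $\alpha$ I adjoin to $G_{\alpha+1}$ some point of $P_\alpha$ and reserve into $R_{\alpha+1}$ some other point of $P_\alpha$. Since each $P_\alpha$ has size $\mathfrak c$ while $G_\alpha\cup R_\alpha$ is small, at each stage I can pick the new ``in'' point and the new ``forbidden'' point to be $\mathbb Q$-linearly independent of everything chosen so far; this independence is exactly what keeps $G:=\bigcup_\alpha G_\alpha$ a $\mathbb Q$-subspace (in particular a dense subgroup) that is disjoint from $\bigcup_\alpha R_\alpha$. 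Then $G$ meets every $P_\alpha$, and so does $Z\supseteq\bigcup_\alpha R_\alpha$, so both $G$ and $Z$ are totally imperfect: $G$ is a dense Bernstein subgroup, giving (1). Guaranteeing that the generated subspace never accidentally swallows a reserved point is the delicate bookkeeping, but it is routine.

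Now (4) is immediate: an uncountable Borel set contains a perfect set, so a Bernstein set is never Borel; and a locally compact subgroup of $\IR^n$ is closed, hence Borel, hence (being dense) all of $\IR^n$, which is impossible for a Bernstein set. For the upper bound in (3), density of $Z$ gives $G$ empty interior, so $\dim G\le n-1$ by the cited dimension theorem. For (2), note that for every open ball $B$ the set $Z\cap B$ is totally imperfect in $B\cong\IR^n$, so $G\cap B=B\setminus Z$ is connected by the complement theorem; these sets are open in $G$ and form a neighbourhood basis, so $G$ is connected and locally connected.

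The main obstacle is the lower bound $\dim G\ge n-1$, which for $n\ge 3$ is \emph{not} a formal consequence of connectedness. I would prove it by an essential-family argument on the cube $Q=[0,1]^n$, using the $n-1$ pairs of opposite faces $A_i=\{x\in Q:x_i=0\}$ and $B_i=\{x\in Q:x_i=1\}$ for $i<n$, intersected with $G$. If $C_i\subseteq G$ is relatively closed and separates $A_i$ from $B_i$ in $G$, then, because every $G\cap B$ is connected, the ``side'' of $G\setminus C_i$ is locally constant off $\overline{C_i}$, whence $\overline{C_i}$ separates the $i$-th pair of faces in $Q$. By the classical theorem on intersections of separators in a cube, $\bigcap_{i<n}\overline{C_i}$ contains a continuum $K$ joining $\{x_n=0\}$ to $\{x_n=1\}$; being a nondegenerate continuum, $K$ contains a Cantor set, which must meet $G$ since $Z$ is totally imperfect. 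Hence $\bigcap_{i<n}C_i=G\cap\bigcap_{i<n}\overline{C_i}\supseteq G\cap K\ne\emptyset$, so the $n-1$ face-pairs are essential in $G$ and $\dim G\ge n-1$. Thus the total imperfectness of $Z$ is invoked three times --- for connectedness, for local connectedness, and (through the continuum $K$) for the dimension lower bound --- and reconciling that property with the group structure in the recursion is the heart of the argument.
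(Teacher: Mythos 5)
Your proposal is correct and follows the same overall strategy as the paper: construct a Bernstein subgroup of $\IR^n$ by a transfinite recursion of length $\mathfrak c$, then deduce (2)--(4) purely from the Bernstein property; your parenthetical remark that the hypothesis must read $n\ge 2$ correctly identifies a typo in the statement. Two components are handled differently. In the recursion, you carry a growing reserved set $R$ and maintain $\mathbb Q$-linear independence of all chosen points, whereas the paper fixes a single point $p\notin G$ and gets the complementary half of the Bernstein property for free from the coset $p+G\subset\IR^n\setminus G$ (since $G$ meets $F-p$ for every uncountable closed $F$); the coset trick makes the bookkeeping lighter --- at each stage only the small set $\{\frac1n(p-g):n\in\IZ\setminus\{0\},\ g\in G_{<\alpha}\}$ must be avoided --- but your version is equally valid. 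The second divergence is the lower bound $\dim G\ge n-1$: the paper simply cites Engelking's Lemma 1.8.16 (a subset of $\IR^n$ meeting every nondegenerate continuum has dimension $\ge n-1$), while you re-derive that lemma via an essential family of $n-1$ face-pairs of the cube and the classical fact that $n-1$ separators intersect in a continuum joining the remaining pair of faces; this is essentially the standard proof of the cited lemma, so you gain self-containedness at the cost of the routine but fiddly verification that the closure in $Q$ of a relative separator of $Q\cap G$ separates $Q$ (which needs the connectedness of $G\cap W$ also for relatively open half-balls $W$ at the boundary of $Q$, and a small adjustment where $\overline{C_i}$ touches the faces). Your connectedness and local connectedness argument --- a closed separator avoiding $G$ is an $F_\sigma$ set containing no Cantor set, hence countable, hence cannot separate an open subset of $\IR^n$ for $n\ge 2$ --- is the paper's argument in substance, merely quoted as a ready-made ``complement of a totally imperfect set'' theorem rather than proved.
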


\begin{proof} Let $(F_\alpha)_{\alpha<\mathfrak c}$ be an enumeration of all uncountable closed subsets of $\IR^n$ by ordinal $<\mathfrak c$. Fix any point $p\in\IR^n\setminus\{\mathbf 0\}$. By transfinite induction, for every ordinal $\alpha<\mathfrak c$ we shall choose a point $z_\alpha\in F_\alpha$ such that the subgroup $G_\alpha\subset\IR^n$ generated by the set $\{z_\beta\}_{\beta<\alpha}$ does not contain the point $p$.  Assume that for some ordinal $\alpha<\mathfrak c$ we have chosen points $z_\beta$, $\beta<\alpha$, so that the subgroup $G_{<\alpha}$ generated by the set $\{z_\beta\}_{\beta<\alpha}$ does not contain $p$.
Consider the set $Z=\{\frac1n(p-g):n\in\IZ\setminus\{0\},\;g\in G_{<\alpha}\}$ and observe that it has cardinality $|Z|\le \w\cdot |G_{<\alpha}|\le \w+|\alpha|<\mathfrak c$. Since the uncountable closed subset $F_\alpha$ of $\IR^n$ has cardinality $|F_\alpha|=\mathfrak c$ (see \cite[6.5]{Ke}), there is a point $z_\alpha\in F_\alpha\setminus Z$. For this point we get $p\ne n z_\alpha+g$ for any $n\in\IZ\setminus\{0\}$, and $g\in G_{<\alpha}$. Consequently, the subgroup $G_{\alpha}=\{nz_\alpha+g:n\in\IZ,\;g\in G_{<\alpha}\}$ generated by the set $\{z_\beta\}_{\beta\le\alpha}$ does not contain the point $p$.
This completes the inductive step.

After completing the inductive construction, consider the subgroup $G$ generated by the set $\{a_\alpha\}_{\alpha<\mathfrak c}$ and observe that $p\notin G$ and $G$ meets every uncountable closed subset $F$ of $\IR^n$. Moreover, since $G$ meets the closed uncountable set $F-p$, the coset $p+G\subset \IR^n\setminus G$ meets $F$. So, both the subgroup $G$ and its complement $\IR^n\setminus G$ meet each uncountable closed subset of $\IR^n$, which means that $G$ is a Bernstein set in $\IR^n$.
The following proposition implies that the group $G$ has properties (2)--(4).
\end{proof}

\begin{proposition} Let $n\ge 2$. Every Bernstein subset $B$ of $\IR^n$ has the following properties:
\begin{enumerate}
\item $B$ is not Borel;
\item $B$ is connected and locally connected;
\item $B$ has dimension $\dim(B)=n-1$.
\end{enumerate}
\end{proposition}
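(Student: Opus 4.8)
The plan is to use only two structural features of a Bernstein set $B\subseteq\IR^n$: that $B$ meets every uncountable closed subset of $\IR^n$, and that $\IR^n\setminus B$ is again a Bernstein set. In particular both $B$ and $\IR^n\setminus B$ are dense, and neither contains an uncountable closed subset of $\IR^n$. Throughout I would use freely that every uncountable closed subset of $\IR^n$ has cardinality $\mathfrak c$ and contains a nonempty perfect set \cite{Ke}. For (1), first observe that $B$ is uncountable, since the $\mathfrak c$ pairwise disjoint closed sets $\{t\}\times\IR^{n-1}$, $t\in\IR$, are uncountable and each meets $B$. If $B$ were Borel it would have the perfect set property, hence contain a nonempty perfect set $P$; but then $P$ is an uncountable closed subset of $\IR^n$ lying inside $B$, so $\IR^n\setminus B$ misses $P$, contradicting the Bernstein property.

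For (2), I would show that $B\cap W$ is connected for every open ball $W$; this yields local connectedness (such traces form a neighbourhood base) and global connectedness (writing $B=\bigcup_k(B\cap B(x_0,k))$ for a fixed $x_0\in B$, an increasing union of connected sets through $x_0$). Suppose $B\cap W=(G_1\cap B)\sqcup(G_2\cap B)$ with $G_1,G_2$ open in $\IR^n$ and both meeting $B$. Since $G_1\cap G_2$ is open and disjoint from the dense set $B$, it is empty. Fix $a\in G_1\cap B$, $b\in G_2\cap B$ and a radius $r'$ with $a,b\in W'$, $\overline{W'}\subseteq W$. In a $2$-plane through $a,b$ I choose an uncountable family of arcs from $a$ to $b$ inside the convex set $W'$, pairwise meeting only at their endpoints; each arc runs from $G_1$ to $G_2$, hence meets $T:=W\setminus(G_1\cup G_2)$ at an interior point (its endpoints lie in $B$ while $T\cap B=\emptyset$), and distinct arcs give distinct such points. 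Thus $T\cap\overline{W'}$ is uncountable, and it is compact since $T$ is closed in $W$ and $\overline{W'}$ is a compact subset of $W$. This is an uncountable closed subset of $\IR^n$ disjoint from $B$, contradicting the Bernstein property.

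For (3), the bound $\dim B\le n-1$ is immediate: $B$ has empty interior (a ball inside $B$ would be an uncountable closed set missed by $\IR^n\setminus B$), and a classical theorem of dimension theory asserts that a subset of $\IR^n$ of dimension $n$ has nonempty interior. The lower bound $\dim B\ge n-1$ is the heart of the matter, and I would obtain it by exhibiting an essential family of $n-1$ pairs of disjoint closed subsets of $B$; by the Eilenberg--Otto characterization of covering dimension this forces $\dim B\ge n-1$. The candidates are the slabs $A_i=\{x\in B:x_i\le0\}$ and $Z_i=\{x\in B:x_i\ge1\}$, $i=1,\dots,n-1$. To prove essentiality, take arbitrary separators $L_i\subseteq B$, say $B\setminus L_i=U_i\sqcup V_i$ with $A_i\subseteq U_i$, $Z_i\subseteq V_i$, and aim for a common point of all $L_i$. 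The step I expect to be the \emph{main obstacle} is that the obvious open thickenings of $U_i,V_i$ in $\IR^n$ only control the points of $B$ and need not separate the full slabs. I would resolve this by clipping: since $U_i\subseteq\{x_i<1\}$ and $V_i\subseteq\{x_i>0\}$ (the extreme $B$-points are absorbed by $A_i,Z_i$), I intersect the thickenings with $\{x_i<1\}$ and $\{x_i>0\}$ and adjoin the exterior open slabs $\{x_i<0\}$ and $\{x_i>1\}$, producing disjoint open sets $\widehat U_i\supseteq\{x_i<0\}$ and $\widehat V_i\supseteq\{x_i>1\}$ with $\widehat U_i\cap B=U_i$, $\widehat V_i\cap B=V_i$; then $\widehat S_i:=\IR^n\setminus(\widehat U_i\cup\widehat V_i)$ genuinely separates the opposite slabs in direction $i$ and satisfies $\widehat S_i\cap B=L_i$.

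Finally I would invoke the classical fact that the $n$ pairs of opposite faces of a box form an essential family. On $R=[-1,2]^{n-1}\times[0,1]$ the set $\widehat S_i\cap R$ separates the $i$-th pair of faces for $i<n$, while $\{x_n=t\}\cap R$ separates the last pair for every $t\in[0,1]$; hence $\bigcap_{i<n}\widehat S_i$ meets each slice $\{x_n=t\}\cap R$ in a point $p_t$. These points are distinct and all lie in the compact set $(\bigcap_{i<n}\widehat S_i)\cap R$, which is therefore an uncountable closed subset of $\IR^n$. By the Bernstein property $B$ meets it, and any such point lies in $\bigcap_{i<n}(\widehat S_i\cap B)=\bigcap_{i<n}L_i$. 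Thus the family is essential, $\dim B\ge n-1$, and combined with the upper bound this gives $\dim B=n-1$, completing the proof.
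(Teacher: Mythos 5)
Your proof is correct, but in parts (2) and (3) it takes a genuinely more self-contained route than the paper. For (1) you give the standard perfect-set-property argument where the paper simply cites \cite[8.24]{Ke}; same content. For (2), both proofs reduce to showing that the trace of $B$ on a ball is connected and both consider the closed separator $T=W\setminus(G_1\cup G_2)$ disjoint from $B$, but they diverge at the key step: the paper argues that $T$ must be countable and then invokes the Mazurkiewicz-type theorem (Theorem 1.8.14 of \cite{En}) that $\IR^n$, $n\ge 2$, cannot be separated by a set of dimension $\le n-2$; you instead produce an uncountable compact subset of $T$ directly via an uncountable family of arcs from $a$ to $b$ meeting only at their endpoints, which contradicts the Bernstein property with no dimension theory at all. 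For (3), the upper bound is identical, while for the lower bound the paper cites Lemma 1.8.16 of \cite{En} (a set meeting every non-trivial continuum in $\IR^n$ has dimension $\ge n-1$), whereas you essentially reprove that lemma for this case: you build an essential family of $n-1$ pairs in $B$, extend the separators to honest partitions $\widehat S_i$ in $\IR^n$ by the clipping construction, and use the essentiality of the faces of a box together with uncountably many horizontal slices to manufacture an uncountable compact set inside $\bigcap_i\widehat S_i$ that $B$ must meet. What the paper's approach buys is brevity via two citations to Engelking; what yours buys is an almost entirely elementary argument whose only external inputs are the perfect set property of Borel sets and the Eilenberg--Otto characterization of dimension. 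Two cosmetic points: in (2) you should replace $G_i$ by $G_i\cap W$ so that $G_1\cap G_2\cap B=\emptyset$ really forces $G_1\cap G_2=\emptyset$ by density, and in (3) the slice $\{x_n=t\}\cap R$ is a partition between the top and bottom faces only for $t\in(0,1)$, which is still uncountable, so nothing is lost.
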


\begin{proof} 1. By \cite[8.24]{Ke}, the Bernstein set $B$ is not Borel (more precisely, $B$ does not have the Baire property in $\IR^n$).
\smallskip

2. To prove that $B$ is connected and locally connected, it suffices to prove that for every open subset $U\subset\IR^n$ homeomorphic to $\IR^n$ the intersection $U\cap G$ is connected. Assuming the opposite, we could find two non-empty open disjoint sets $U_1,U_2\subset U$ such that $U\cap B=(U_1\cap B)\cup (U_2\cap B)$. Consider the complement $F=U\setminus(U_1\cup U_2)\subset U\setminus B$ and observe that $F$ is closed in $U$ and hence of type $F_\sigma$ in $\IR^n$. If $F$ is uncountable, then $F$ contains an uncountable closed subset of $\IR^n$ and hence meets the set $B$, which is not the case. So, the closed subset $F$ of $U$ is at most countable and separates the space $U\cong \IR^n$, which contradicts Theorem 1.8.14 of \cite{En}.
\smallskip

3. Since the subset $B$ has empty interior in $\IR^n$, we can apply Theorem 1.8.11 of \cite{En} and conclude that $\dim(B)<n$. On the other hand, Lemma 1.8.16 \cite{En} guarantees that $B$ has dimension $\dim(B)\ge n-1$ (since $B$ meets every non-trivial compact connected subset of $\IR^n$). So, $\dim(B)=n-1$.
\end{proof}

\end{document}